\newtheorem{thm}{Theorem}[section]
\newtheorem{lemma}[thm]{Lemma}
\theoremstyle{definition}
\newtheorem{defin}[thm]{Definition}
\newtheorem{exam}[thm]{Example}
\theoremstyle{remark}
\newtheorem{rem}[thm]{Remark}
\newcommand\numberthis{\addtocounter{equation}{1}\tag{\theequation}}
\numberwithin{equation}{section}
\def\CC{\mathbb C}
\def\NN{\mathbb N}
\def\ZZ{\mathbb Z}
\def\RR{\mathbb R}
\def\bb{\mathcal B}
\def\nn{\mathcal N}
\def\sinc{\text{sinc}\,}
\def\lin{\text{lin}}
\title{Random Sampling of Mellin Band-limited Signals}
\author{Shivam Bajpeyi\thanks{Email id: shivambajpai1010@gmail.com} \ Dhiraj Patel\thanks{Email id: dpatel.iitd@gmail.com} \ S. Sivananthan \thanks{Email id: siva@maths.iitd.ac.in}\\
	Department of Mathematics, Indian Institute of Technology Delhi,\\ New Delhi-110016, India}
\date{}
\begin{document}
	
	\maketitle
	
	\begin{abstract}
		 In this paper, we address the random sampling problem for the class of Mellin band-limited functions $\bb_T$ which is concentrated on a bounded cube. It is established that any function in $\bb_T$ can be approximated by an element in a finite-dimensional subspace of $\bb_T.$ Utilizing the notion of covering number and Bernstein's inequality to the sum of independent random variables, we prove that the random sampling inequality holds with an overwhelming probability provided the sampling size is large enough.
		\\
		\vskip0.001in
		
		\noindent Keywords: Sampling Inequality; Random sampling; Mellin Transform; Mellin band-limited functions; Reproducing kernel space
		\\
		\vskip0.001in
		\noindent Mathematics Subject Classification(2020): 94A20, 41A05,42A61,42C15
	\end{abstract}

	\section{Introduction}
	The theory of sampling and reconstruction has been receiving significant attention from the broad areas of approximation theory, signal processing, and digital communication. The problem of sampling 
	for the function space $V$ is mainly concerned with the stable recovery of a function $f \in V$ from its sample values $\{f(x_{j})\}$ on some countable set $X=\{x_{j}:j \in J \}$, without losing any information. The sampling set $X$ is known as a stable sampling set. For the space of Mellin band-limited functions $\bb_T$ (defined in Section \ref{section2}), the sampling problem is equivalent to finding a sampling set $\{x_j:j\in J\}$ such that the sampling inequality
	\begin{equation} \label{main inq}
		A \|f\|_{X_c^{2}(\RR^{n}_{+})}^2 \leq \sum_{j \in J} |f(x_j)|^2 x_{j}^{2c} \leq B \|f\|_{X_c^{2}(\RR^{n}_{+})}^2,
	\end{equation}
	holds for all $f$ in $\bb_T$, for some $A,B>0$. Here $\RR_{+}$ denotes the set of positive real numbers. The Banach space $X_c^2(\mathbb{R}^{n}_{+})$ is a collection of all measurable function $f$ with $$\|f\|_{X_c^{2}(\RR^{n}_{+})}^2=\int_{\RR_{+}^{n}} |x|^{2c-1}|f(x)|^2\, dx<\infty.$$
	An important result in this direction was collectively given by Bertero and Pike \cite{bertero}, and Gori \cite{gori} which asserts that any function $f$ in $\bb_T$ can be stably recovered from its sample at exponentially spaced sample points $\{ e^{\frac{k}{T}}:k\in \ZZ \}$ and the reconstruction formula is given by
	\begin{equation} \label{expsmpl}
		f(x)= \sum_{k \in \ZZ} f(e^{\frac{k}{T}})\, \lin_{\frac{c}{T}}(e^{-k}x^{T}), \qquad x\in \RR_{+},
	\end{equation}
	where $\lin_{c}(x) = x^{-c}\sinc(\log x)$ and $\sinc(u)= \frac{\sin \pi u}{\pi u}.$ In this case, the sampling set $\{e^{k/T}:k\in \ZZ\}$ is a stable sample set for $\bb_T$ and $A=B=T.$ The exponentially spaced sampling set is useful in the application where the signal information is accumulated near zero. The exponential sampling method is also useful to deal with various real-world problems arising in science and engineering (see \cite{casasent,ostrowsky}). Butzer and Jansche \cite{butzer5} pioneered the mathematical study of the exponential sampling formula \eqref{expsmpl} employing the tools of the theory of Mellin transform. Some recent developments related to the exponential sampling problem can be observed in \cite{ownkant,nfo,bardaro7,mleb}.\par
	
	The signals are used to recover from their random measurements in the field of image processing \cite{chan}, compressed sensing \cite{candes,eldar}, and machine learning \cite{cucker1,cover}. The random sampling problem deals with the following question. What is the probability that the uniformly distributed random sample points satisfy the sampling inequality \eqref{main inq} for some class of functions in $\bb_T$? Due to the randomness of the samples, the sampling inequality \eqref{main inq} may not hold surely for $\bb_T$. So, the random sampling problem estimates the following probability
	$$ P \Big( A \|f\|_{X_c^{2}(\RR^{n}_{+})}^2 \leq \sum_{j \in J} |f(x_j)|^2 x_{j}^{2c} \leq B \|f\|_{X_c^{2}(\RR^{n}_{+})}^2 \Big) \geq 1-\epsilon,$$ for some class of Mellin band-limited function, where $\epsilon >0$ is arbitrary small.\par
	 
In the case of finite dimensional space, the random sampling problem was studied for the space of multivariate trigonometric polynomials \cite{2005,xian}. For the infinite-dimensional space of Fourier band-limited functions, the stable recovery from their random samples scattered throughout $\RR^n$ was shown to be impossible \cite{groch}. This motivated to consider the random samples distributed on a bounded cube $C\subset \RR^n$ and the class of functions concentrated on $C$. It was established in \cite{groch,groch1} that the class of Fourier band-limited functions concentrated on $C$ can be stably recovered from its samples at uniformly distributed random points on $C$. The random sampling problem has been investigated for the shift-invariant spaces in \cite{devrandom,fuhr,yang}. In recent years, the related random sampling problem has been studied on the sphere in \cite{pams}, for the image space of an idempotent integral operator \cite{sun2021,jma,rks3} which generalizes the space of shift-invariant space and the signal space of finite rate of innovation, see \cite{sun2010}.	\par
 
    A Mellin band-limited function can not be Fourier band-limited at the same time, and it is analytically extensible to the Riemann surface of the (complex) logarithm (see \cite{bardaro4}). Since one must appropriately extend the idea of the Bernstein spaces, involving Riemann surfaces, the theory of Mellin band-limited functions differs greatly from the theory of Fourier band-limited functions.
	The main aim of the paper is to study the stability of the random sampling set for the space Mellin bandlimited functions, using the Mellin transform and its inversion theory. Motivated from \cite{groch}, we consider the problem of random sampling for the class of $\delta$-concentrated Mellin band-limited functions on $C_{R}:=[1/R, R]^n\subset \RR_{+}^n$, i.e.,
	$$\bb_{T,\delta}:=\Big\{f \in \bb_{T}:  \int_{C_R} |f(x)|^2 x^{2c-1} dx \geq (1-\delta)\|f\|^{2}_{X_{c}^{2}(\RR_{+}^n)} \Big\},$$
	where $\delta\in (0,1),$ $R>1,$ and $c \in \mathbb{R}^n.$ 
	\par 
	The proposed plan of the paper goes as follows. Section \ref{section2} provides some preliminaries required to derive the desired  results. In Section \ref{section3}, we first establish that any element in $\bb_{T}$ can be approximated by an element in a finite-dimensional subspace of $\mathcal{B}_{T}$ with the help of the representation formula \eqref{expsmpl}. Then we show that any bounded subset of $\mathcal{B}_{T,\delta}$ is totally bounded with respect to $\|\cdot \|_{X_{c}^{\infty}(C_R)}$. In the end, by using the notion of covering number and the well-known Bernstein's inequality, we prove that the random sampling inequality holds with overwhelming probability for the functions concentrated on the compact set.
	
	\section{Preliminaries}\label{section2}
	In this section, we define some preliminary results, and basic notations used in the rest of the paper are given in Table 1.
	\par 
	For $1\leq p<\infty$, let $L^{p}(\RR^{n}_{+})$ be the space of $p-$integrable functions in the Lebesgue sense on $\mathbb{R}^{n}_{+}$ with usual $p-$norm. Moreover, $L^{\infty}(\RR^{n}_{+})$ denotes the class of bounded measurable functions defined on $\mathbb{R}^{n}_{+}$ with norm $ \|f\|_{L^{\infty}(\RR^{n}_{+})} := ess\sup_{x \in \mathbb{R}^{n}_{+}}|f(x)|.$ Let $c \in \RR^n$ be fixed and for $1 \leq p \leq \infty,$ we define the space
	$$X_{c}^{p}(\mathbb{R}^{n}_{+}):=\big\{f:\mathbb{R}^{n}_{+} \rightarrow \mathbb{C}: (\cdot)^{c-1/p}f(\cdot) \in L^{p}(\mathbb{R}^{n}_{+})\big\}$$ equipped with norm
	$$\|f\|_{X_{c}^{p}(\mathbb{R}^{n}_{+})}= \|(\cdot)^{c-1/p} f(\cdot) \|_{L^{p}(\mathbb{R}^{n}_{+})}.$$ It is important to mention that $X_{c}^{p}(\mathbb{R}^{n}_{+})$ is Banach space (see \cite{butzer7}). Moreover, for $p=2$, $X_{c}^{2}(\mathbb{R}^{n}_{+})$ is a Hilbert space with the inner product
	$$ \langle f,g \rangle_{X_{c}^{2}(\mathbb{R}^{n}_{+})} := \int_{\RR_{+}^{n}} f(x) \overline{g(x)} x^{2c-1} dx.$$
	Let $c + it =:s \in \CC^n.$ Then the multi-dimensional Mellin transform of any function $f \in X_{c}^{1}(\mathbb{R}^{n}_{+})$ is given by (see \cite{tuan})
	$$\hat{M}[f](s) :=\int_{\mathbb{R}^{n}_{+}} f(x)\ x^{s-1}\ dx.$$ For a fixed $c \in \RR^n,$ the corresponding inverse Mellin transform is defined as
	$$\hat{M}^{-1}[f](x) := \frac{1}{(2 \pi i)^n}\int_{c+i \mathbb{R}^{n}} \hat{M}[f](s) \ x^{-s}\ ds.$$
    
    Riemann first employed Mellin transform in his renowned memoir on the Riemann zeta function. Later, Mamedov studied the Mellin transform and its properties in \cite{mamedeo}. The article \cite{butzer3} presents various theoretical aspects of Mellin transform along with its applications to different areas of mathematical analysis. For some significant development of the Mellin theory, we also refer to \cite{bardaro1,bardaro2,bardaro3,bardaro4}.\par
	
    \begin{defin}
		For $c,t \in \mathbb{R}^n$ and $T>0,$ any function $f \in X_{c}^{2}(\mathbb{R}^{n}_{+})$ is said to be Mellin band-limited to $[-T,T]^n$ if $\hat{M}[f](c+it)=0$ for all $\|t\|_{\infty}> T.$ We denote $\mathcal{B}_{T}$ the space of all Mellin band-limited functions, i.e.,
		$$ \bb_{T} = \big\{f \in X_{c}^{2}(\mathbb{R}^{n}_{+}): \hat{M}[f](c+it)=0 \ \mbox{ for \ all}\ \|t\|_{\infty} >T \big\}.$$
		
		\begin{table}[H] \label{table1}
			\centering
			\begin{tabular}{|c|c|}
				\hline
				Notation & Remark\\
				\hline
				$x:=\big(x(1),x(2),\dots,x(n)\big)\in \RR^n$ & $x(i)\in \RR$\\
				\hline
				$\frac{x}{y}:=\Big(\frac{x(1)}{y(1)},\frac{x(2)}{y(2)},\dots,\frac{x(n)}{y(n)}\Big)$ & $x,y\in \RR_{+}^n$\\
				\hline
				$\log x:=\big(\log x(1),\log x(2),\dots,\log x(n)\big)$ & $x\in \RR_{+}^n$\\
				\hline
				$x^c:=x(1)^{c(1)}\cdot x(2)^{c(2)}\cdots x(n)^{c(n)}$ & $x\in \RR_{+}^n$, $c\in \RR^n$\\
				\hline
				$\alpha^x:=\big( \alpha^{x(1)},\alpha^{x(2)},\dots,\alpha^{x(n)} \big)$ & $\alpha\in \RR_{+}$, $x\in \RR^n$\\
				\hline
				$\sinc(x):=\frac{\sin \pi x(1)}{\pi x(1)}\cdot \frac{\sin \pi x(2)}{\pi x(2)}\cdots \frac{\sin \pi x(n)}{\pi x(n)}$ & $x\in \RR^n$\\
				\hline
				$\lin_{c}(x):=x^{-c}\sinc(\log x)$ & $x\in \RR_{+}^n$\\
				\hline
			\end{tabular}
			\caption{Some notations used for the rest of the paper}
		\end{table}
		
	\end{defin}
	A Hilbert space $H$ of functions defined on $\Lambda$ is a reproducing kernel Hilbert space if for each $x\in \Lambda$, the point evaluation functional $f \mapsto f(x)$ is continuous, i.e., for each $x \in \Lambda,$ there exists positive constant $C_{x}$ such that
	$$|f(x)| \leq C_{x} \|f\| \qquad \forall\, f \in H.$$
	\begin{lemma} \label{estmt}
		The space $\bb_T$ is a reproducing kernel space. Moreover, for $x \in \mathbb{R}^{n}_{+},$ we have
		$$ | f(x)| \leq  x^{-c} \| f\|_{X_{c}^{2}(\mathbb{R}^{n}_{+})}\ ,\ \ \forall f \in \bb_T.$$
	\end{lemma}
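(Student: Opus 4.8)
The plan is to show that the point-evaluation functional is bounded by transporting the norm to the Mellin side and then combining a Plancherel-type identity with the Cauchy--Schwarz inequality. The starting point is the observation that the componentwise substitution $x=e^{u}$ turns the weighted measure $x^{2c-1}\,dx$ on $\RR_{+}^{n}$ into $e^{2c\cdot u}\,du$ on $\RR^{n}$ and converts the Mellin transform into the Fourier transform of $u\mapsto e^{c\cdot u}f(e^{u})$. Consequently the Fourier--Plancherel theorem yields the Mellin--Plancherel identity
\begin{equation*}
\|f\|_{X_{c}^{2}(\RR_{+}^{n})}^{2}=\frac{1}{(2\pi)^{n}}\int_{\RR^{n}}\big|\hat{M}[f](c+it)\big|^{2}\,dt ,
\end{equation*}
valid for every $f\in X_{c}^{2}(\RR_{+}^{n})$. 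This is the identity I would establish (or cite) first, since it is the bridge between the spatial norm and the transform-side $L^{2}$-norm.

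Next, for $f\in\bb_{T}$ I would apply the inversion formula from Section~\ref{section2}. Writing $s=c+it$, so that $ds=i^{n}\,dt$ and the prefactor $(2\pi i)^{-n}$ becomes $(2\pi)^{-n}$, the definition of $\hat{M}^{-1}$ gives
\begin{equation*}
f(x)=\frac{1}{(2\pi)^{n}}\int_{\RR^{n}}\hat{M}[f](c+it)\,x^{-c-it}\,dt ,
\end{equation*}
and the band-limiting hypothesis $\hat{M}[f](c+it)=0$ for $\|t\|_{\infty}>T$ collapses this integral to the bounded cube $[-T,T]^{n}$. Since $|x^{-c-it}|=x^{-c}$ for $x\in\RR_{+}^{n}$, I would factor out $x^{-c}$ and apply Cauchy--Schwarz on $[-T,T]^{n}$, bounding $|f(x)|$ by $x^{-c}$ times the product of $\big(\mathrm{vol}\,[-T,T]^{n}\big)^{1/2}$ and the $L^{2}$-mass of $\hat{M}[f]$ over the cube. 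By the Plancherel identity of the previous step the latter equals a constant multiple of $\|f\|_{X_{c}^{2}(\RR_{+}^{n})}$, while the cube has finite volume; combining these yields a pointwise estimate of the form $|f(x)|\le C_{x}\,\|f\|_{X_{c}^{2}(\RR_{+}^{n})}$ with $C_{x}$ proportional to $x^{-c}$. The finiteness of $C_{x}$ for each fixed $x$ is precisely the continuity of the evaluation $f\mapsto f(x)$, and since $\bb_{T}$ is a closed subspace of the Hilbert space $X_{c}^{2}(\RR_{+}^{n})$ it inherits the Hilbert structure, so $\bb_{T}$ is a reproducing kernel Hilbert space.

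The change of variables and the Cauchy--Schwarz estimate are routine; the step requiring the most care is the rigorous justification of the Mellin--Plancherel identity and of the pointwise validity of the inversion formula. For a generic element of $X_{c}^{2}$ the inversion integral must a priori be read as an $L^{2}$-limit, but because $\hat{M}[f]$ is supported on the compact cube $[-T,T]^{n}$ it is automatically integrable, so the integral converges absolutely and represents $f$ pointwise; this is what legitimizes the Cauchy--Schwarz step and simultaneously exhibits a bounded (after the $x^{-c}$ weight) representative for every $f\in\bb_{T}$. I expect the principal obstacle to be carrying the one-dimensional Fourier-to-Mellin correspondence cleanly through the multi-index conventions of Section~\ref{section2}, so that the emerging constant is tracked correctly and indeed matches the factor $x^{-c}$ in the stated bound.
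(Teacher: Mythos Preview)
Your argument is correct and establishes the reproducing-kernel property, but it proceeds along a genuinely different line from the paper. The paper stays on the spatial side: it invokes the reproducing-kernel (or ``exponential sampling'') integral formula
\[
f(x)=T^{n}\int_{\RR_{+}^{n}} f(y)\,\lin_{c/T}\!\big((x/y)^{T}\big)\,\frac{dy}{y}
\]
from \cite{bardaro1}, splits off the factor $x^{-c}$ coming from $\lin_{c/T}$, and applies Cauchy--Schwarz in $X_{c}^{2}$ against the Haar measure $dy/y$, reducing the kernel term to $\|\sinc\|_{L^{2}(\RR^{n})}$. You instead pass to the Mellin side, use Plancherel to identify $\|f\|_{X_{c}^{2}}$ with the $L^{2}$-mass of $\hat M[f]$, and then apply Cauchy--Schwarz on the compactly supported inversion integral over $[-T,T]^{n}$. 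Your route is more self-contained (it does not need to cite the reproducing-kernel formula separately) and mirrors the classical Paley--Wiener argument exactly; the paper's route has the advantage of displaying the reproducing kernel $\lin_{c/T}$ explicitly, which is conceptually tied to the later expansion in terms of $\lin_{c/T}(e^{-k}x^{T})$. One point to note: carrying your Cauchy--Schwarz through gives the constant $(T/\pi)^{n/2}$ in front of $x^{-c}$, not~$1$, so while the RKHS conclusion is unaffected, you will not literally reproduce the displayed inequality with unit constant; your closing concern about ``tracking the constant correctly'' is therefore warranted.
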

	
	\begin{proof}
		Let $f$ be Mellin band-limited to $[-T,T]$, and $T>0.$ Then from \cite[Theorem 4]{bardaro1}, we have
		\begin{equation} \label{rkp1}
			f(x)= T \int_{\RR_{+}} f(y)\, \lin_{\frac{c}{T}} \big((x/y)^T \big) \frac{dy}{y}, \qquad x\in \RR_{+}.
		\end{equation}
		Using $n-$dimensional Mellin transform and inverse Mellin transform, we extend (\ref{rkp1}) for $n$-dimensional Mellin band-limited function. In particular, we have
		\begin{equation} \label{rkp}
			f(x)= T^n \int_{\RR_{+}^n} f(y)\, \lin_{\frac{c}{T}} \Big(\frac{x}{y} \Big)^T y^{-1}dy,
		\end{equation}
		where $\lin_{\frac{c}{T}} \big(\frac{x}{y} \big)^T:=x^{-c}y^c \ \sinc(T\log x-T\log y).$ In view of (\ref{rkp}) and H\"{o}lder's inequality, we have
		\begin{eqnarray*}
			|f(x)| & \leq & T^n x^{-c} \left( \int_{\mathbb{R}^{n}_{+}} |f(y)|^2 y^{2c-1} dy \right)^{1/2} \left( \int_{\mathbb{R}^{n}_{+}} \sinc^2 \left(T \log(x/y) \right) y^{-1} dy \right)^{1/2} \\
			& \leq & x^{-c} \|f\|_{X_{c}^{2}(\RR_{+}^n)} \|\sinc\|_{L^2(\mathbb{R}^{n})}\\
			& \leq & x^{-c} \|f\|_{X_{c}^{2}(\RR_{+}^n)}.
		\end{eqnarray*}
		This completes the proof.
	\end{proof}
	Now we see an example of Mellin band-limited function which is important due to its major application in approximation theory. For instance, it plays a vital role in the study of the convergence of generalized exponential sampling series, see \cite{bardaro7}.
	\begin{exam} \cite{bardaro7}
		Let $c\in \RR, \alpha \geq 1,$ and $k \in \mathbb{N}$. Then the \textit{Mellin Jackson kernel} is defined as
		$$J_{\alpha,k}(x):= C_{\alpha,k}\ x^{-c} \sinc^{2k} \left(\frac{\log x}{2 \alpha k \pi} \right),$$
		where $x\in \RR_{+}$ and $\displaystyle C^{-1}_{\alpha,k} := \int_{0}^{\infty} \sinc^{2k} \left(\frac{\log x}{2 \alpha k \pi} \right)\frac{dx}{x}.$ Since $\hat{M}[J_{\alpha,k}](c+it)=0$ for $|t| \geq \frac{1}{\alpha},$ the function $J_{\alpha,\eta}$ is Mellin band-limited. In particular, for $k=1,$ this is well-known as \textit{Mellin Fejer kernel}, is given by
		$$F_{\rho}^{c}(x)= \frac{\rho}{2\pi}x^{-c}\sinc^{2} \left( \frac{\rho \log \sqrt{x}}{\pi} \right).$$
	\end{exam}
	
	\section{Main Results} \label{section3}
	In this section, we discuss the proposed results of the paper. We see that for each $f\in \bb_{T},$ the following representation holds:
	$$ f(x)= \sum_{k \in \ZZ^n} f(e^{k/T})\,\lin_{\frac{c}{T}}(e^{-k}x^{T}), \qquad x\in \RR_{+}^n.$$
	Now we aim to show that any $f \in \mathcal{B}_{T}$ can be approximated on $C_R$ by an element from a finite-dimensional subspace $\mathcal{B}^{N}_{T}$ of $\mathcal{B}_{T}$ where $\mathcal{B}^{N}_{T}$ is defined by
	$$\bb_{T}^N= \Big\{ \sum_{k \in \ZZ^n \cap [-\frac{N}{2},\frac{N}{2}]^n} c_k\, \lin_{\frac{c}{T}} \big(e^{-k} (\cdot)\big): c_k\in \RR \Big\}.$$
	\begin{thm} \label{fd}
		Let $\epsilon >0.$ Then for each $f\in \bb_{T},$ there exists $f_{N} \in \bb_{T}^N$ such that
		$$ \|f-f_{N} \|_{X_{c}^{\infty}(C_{R})} < \epsilon \|f\|_{X_c^2(\mathbb{R}^{n}_{+})},$$ 
		whenever $ N > 4T \pi^{-2} \epsilon^{-2/n}+ 2 T \log R.$
	\end{thm}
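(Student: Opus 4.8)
The plan is to take $f_N$ to be the truncation of the exponential sampling series to the cube,
$$f_N(x) = \sum_{k\in\ZZ^n\cap[-\tfrac N2,\tfrac N2]^n} f(e^{k/T})\,\lin_{c/T}(e^{-k}x^T),$$
which lies in $\bb_T^N$; then $f-f_N$ is exactly the tail of the representation series over $k\in\ZZ^n\setminus[-\tfrac N2,\tfrac N2]^n$. The first thing I would do is remove the weight and the $\lin$ kernel by a direct computation from $\lin_c(x)=x^{-c}\sinc(\log x)$, namely the identity $x^c\,\lin_{c/T}(e^{-k}x^T)=(e^{k/T})^c\,\sinc(T\log x-k)$. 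Writing $u=T\log x$, this turns the estimate into a sup-norm bound on the tail of a tensor-product cardinal (sinc) series: for $x\in C_R$ one has $u\in[-T\log R,\,T\log R]^n$, and $x^c\big(f(x)-f_N(x)\big)=\sum_{k\notin[-\frac N2,\frac N2]^n} f(e^{k/T})(e^{k/T})^c\,\sinc(u-k)$.

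Next I would split the samples from the kernels by Cauchy--Schwarz,
$$\big|x^c(f(x)-f_N(x))\big|\le\Big(\sum_{k\notin[-\frac N2,\frac N2]^n}|f(e^{k/T})|^2(e^{k/T})^{2c}\Big)^{1/2}\Big(\sum_{k\notin[-\frac N2,\frac N2]^n}\sinc^2(u-k)\Big)^{1/2}.$$
The first factor I would control by the Parseval/tight-frame identity underlying \eqref{expsmpl}, $\sum_{k\in\ZZ^n}|f(e^{k/T})|^2(e^{k/T})^{2c}=T^n\|f\|^2_{X_c^2(\RR_+^n)}$ (the constant $A=B=T$ from the one-dimensional exponential sampling theorem, tensored up), so this factor is at most $T^{n/2}\|f\|_{X_c^2(\RR_+^n)}$. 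This reduces everything to a uniform bound on the sinc tail over the box in $u$.

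The crux is therefore the third step: estimating $\sup\big\{\sum_{k\notin[-\frac N2,\frac N2]^n}\sinc^2(u-k):|u(i)|\le M\big\}$ with $M=T\log R$. The two ingredients are the partition-of-unity identity $\sum_{m\in\ZZ}\sinc^2(v-m)=1$ and the one-dimensional tail estimate, valid for $|v|\le M$ and $N/2>M$,
$$\sum_{|m|>N/2}\sinc^2(v-m)\le\frac{\sin^2\pi v}{\pi^2}\sum_{|m|>N/2}\frac{1}{(|m|-M)^2}\le\frac{4}{\pi^2(N-2M)}.$$
How one assembles the $n$ coordinates from this one-dimensional estimate is exactly what fixes the final threshold: organizing the complement of the cube as a product of per-coordinate tails yields the factor $\big(\tfrac{4}{\pi^2(N-2M)}\big)^{n/2}$, which after multiplication by $T^{n/2}$ and imposing that the result be $<\epsilon$ gives precisely $N>4T\pi^{-2}\epsilon^{-2/n}+2T\log R$.

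I expect the main obstacle to be this $n$-dimensional assembly. The complement $\ZZ^n\setminus[-\tfrac N2,\tfrac N2]^n$ is the event ``at least one coordinate is large,'' so a naive union bound over coordinates only produces $\sum_i b_i\lesssim n\cdot\tfrac{4}{\pi^2(N-2M)}$, which after Cauchy--Schwarz yields a weaker $\epsilon^{-2}$-type threshold carrying an extra factor $nT^n$. Matching the stated $\epsilon^{-2/n}$ dependence requires controlling the tail by a \emph{product} of one-dimensional tails rather than a sum; making this step rigorous (or pinning down exactly the constant and exponent the argument genuinely delivers) is the delicate point I would devote the most care to, along with checking the standing requirement $N/2>T\log R$ — itself implied by the hypothesis $N>4T\pi^{-2}\epsilon^{-2/n}+2T\log R$ — that legitimizes the one-dimensional tail bound.
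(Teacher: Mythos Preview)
Your plan coincides with the paper's proof: the same truncation $f_N$, the same identity $x^c\,\lin_{c/T}(e^{-k}x^T)=(e^{k/T})^c\,\sinc(T\log x-k)$, the same Cauchy--Schwarz split, and the same Parseval bound $\sum_k|f(e^{k/T})|^2 e^{2\langle c,k\rangle/T}=T^n\|f\|_{X_c^2}^2$ for the coefficient factor. The step you flag as delicate the paper handles by bounding $\sin^2\le 1$ and majorizing the remaining sum by the integral $\pi^{-2n}\int_{\RR^n\setminus[-N/2,N/2]^n}\prod_i(u(i)-y(i))^{-2}\,dy$, which after translating by $u$ it evaluates as $\big(\tfrac{4}{\pi^2(N-2T\log R)}\big)^n$ --- precisely the product-of-one-dimensional-tails structure you were aiming for.
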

	\begin{proof} For any $f \in \mathcal{B}_{T}$ and $x \in C_{R},$ we have $f_N \in \mathcal{B}^{N}_{T}$ as
		$$f_{N}(x)= \sum_{k \in \ZZ^n \cap [-\frac{N}{2},\frac{N}{2}]^n} f(e^{k/T})\, \lin_{\frac{c}{T}} (e^{-k}x^{T}).$$
		In view of Cauchy-Schwarz inequality, we obtain
		\begin{align*}
			&x^c \big|f(x)-f_{N}(x) \big|\\
			=& \bigg| x^c \sum_{k \in \mathbb{Z}^n \setminus[-\frac{N}{2},\frac{N}{2}]^n}f(e^{k/T})\, \lin_{\frac{c}{T}} (e^{-k}x^{T}) \bigg| \\
			=& \bigg| \sum_{k \in \mathbb{Z}^n \setminus[-\frac{N}{2},\frac{N}{2}]^n} f(e^{k/T}) \ e^{\frac{\langle c,k \rangle_2}{T}}\, \sinc(T \log x-k) \bigg| \\
			\leq & \bigg(\sum_{k \in \mathbb{Z}^n \setminus[-\frac{N}{2},\frac{N}{2}]^n} |f(e^{k/T})|^{2} e^{\frac{2\langle c,k \rangle_2}{T}}  \bigg)^{1/2} \bigg(\sum_{k \in \mathbb{Z}^n \setminus[-\frac{N}{2},\frac{N}{2}]^n} \sinc^{2}(T \log x - k) \bigg)^{1/2}. \numberthis \label{est}
		\end{align*}
		
		In view of \cite[Theorem 4]{bardaro1}, for $j,k \in \ZZ^n,$ we have 
		$$ T^n e^{\frac{-2\langle c,k \rangle_2}{T}} \big\langle \lin_{\frac{c}{T}}(e^{-k} x^T),\lin_{\frac{c}{T}}(e^{-j} x^T) \big\rangle_{X_c^2(\mathbb{R}^{n}_{+})} = \lin_{\frac{c}{T}}(e^{j-k})= \delta_{j,k}\ ,$$ where $\delta_{j,k}=$
$    \begin{cases}
            1, &         \text{if } j=k,\\
            0, &         \text{if } j\neq k.
    \end{cases} $
    
    This gives
		\begin{eqnarray*}
			\|f\|_{X_{c}^{2}(\mathbb{R}^{n}_{+})}^{2} &=& \sum_{k \in \ZZ^n} \sum_{j \in \ZZ^n} f(e^{k/T}) \overline{f(e^{j/T})}\, \big\langle  \lin_{\frac{c}{T}} (e^{-k}x^{T}), \lin_{\frac{c}{T}} (e^{-j}x^{T}) \big\rangle_{X_{c}^{2}(\mathbb{R}^{n}_{+})} \\
			&=& \frac{1}{T^n} \sum_{k \in \ZZ^n} |f(e^{k/T})|^{2}  e^{\frac{2\langle c,k \rangle_2}{T}}.
		\end{eqnarray*}
		Now for $\displaystyle x \in C_{R},$ we have
		\begin{align*}
			\sum_{k\in \ZZ^n \setminus[-\frac{N}{2},\frac{N}{2}]^n} \sinc^{2} \pi(T \log x - k)\leq & \pi^{-2n} \sum_{k \in \ZZ^n \setminus[-\frac{N}{2},\frac{N}{2}]^n} \left(\prod_{i=1}^{n} \sin^{2}(T\log x_i-k_i) \,(T \log x_i - k_i)^{-2} \right) \\
			\leq & \pi^{-2n}\int_{\RR^n \setminus[-\frac{N}{2},\frac{N}{2}]^n}  \left( \prod_{i=1}^{n} (T \log x_i - y_i)^{-2} \right) dy \\
			\leq & \pi^{-2n}\int_{\RR^n \setminus[-\frac{N}{2}+T \log R,\frac{N}{2}-T \log R]^n} y^{-2}\,dy \\
			=& \frac{{4^n}}{\pi^{2n}(N-2T\log R)^n}.
		\end{align*}
		On combining these estimates, we obtain from (\ref{est}) that
		$$ \|f-f_{N} \|_{X_{c}^{\infty}(C_{R})} \leq \left(T^{n/2}  \|f\|_{X_{c}^{2}(\mathbb{R}^{n}_{+})} \right) \left(\frac{4}{\pi^{2}(N-2T\log R)} \right)^{n/2}.$$
  This implies that if  $ N > 4T \pi^{-2} \epsilon^{-2/n}+ 2 T \log R,$ then $$\|f-f_{N} \|_{X_{c}^{\infty}(C_{R})} < \epsilon \|f\|_{X_c^2(\mathbb{R}^{n}_{+})}.$$
		\color{black}
		This completes the proof.
	\end{proof} 
	
	In order to derive the desired probability estimate for the sampling inequality \eqref{main inq} to hold, we first prove that the bounded subset of $\delta$-concentrated functions is totally bounded. 
	\begin{lemma} \label{tb}
		The set $\bb_{T,\delta}^{*}:= \big\{ f\in \bb_{T,\delta}: \|f\|_{X_{c}^{2}(\RR_{+}^n)}=1 \big\}$ is totally bounded with respect to $\|\cdot\|_{X_{c}^{\infty}(C_{R})}.$
	\end{lemma}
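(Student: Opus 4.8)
The plan is to reduce total boundedness of $\bb_{T,\delta}^{*}$ in the $\|\cdot\|_{X_c^\infty(C_R)}$ norm to total boundedness of a \emph{bounded} set living in the finite-dimensional space $\bb_T^N$, and then invoke the classical fact that bounded subsets of a finite-dimensional normed space are totally bounded. Fix $\epsilon>0$. First I would apply Theorem \ref{fd} with tolerance $\epsilon/2$: choose $N> 4T\pi^{-2}(\epsilon/2)^{-2/n}+2T\log R$, so that for every $f\in\bb_{T,\delta}^{*}$ the truncation $f_N\in\bb_T^N$ satisfies $\|f-f_N\|_{X_c^\infty(C_R)}<(\epsilon/2)\,\|f\|_{X_c^2(\RR_+^n)}=\epsilon/2$, using that $\|f\|_{X_c^2(\RR_+^n)}=1$ on $\bb_{T,\delta}^{*}$.

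Next I would show that the family $\{f_N:f\in\bb_{T,\delta}^{*}\}$ is a bounded subset of $\bb_T^N$. The key input is the Parseval-type identity established inside the proof of Theorem \ref{fd}, namely $\|f\|_{X_c^2(\RR_+^n)}^2=T^{-n}\sum_{k\in\ZZ^n}|f(e^{k/T})|^2 e^{2\langle c,k\rangle_2/T}$. For $f\in\bb_{T,\delta}^{*}$ this forces $\sum_{k\in\ZZ^n}|f(e^{k/T})|^2 e^{2\langle c,k\rangle_2/T}=T^n$, and in particular each coefficient $c_k:=f(e^{k/T})$ with $k\in\ZZ^n\cap[-\tfrac{N}{2},\tfrac{N}{2}]^n$ obeys $|c_k|\le T^{n/2}e^{-\langle c,k\rangle_2/T}$. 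Hence the coefficient vectors $(c_k)$ range over a bounded subset of a finite-dimensional space of dimension $m$, the number of lattice points in $\ZZ^n\cap[-\tfrac{N}{2},\tfrac{N}{2}]^n$.

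Since the synthesis map $(c_k)\mapsto \sum_{k} c_k\,\lin_{\frac{c}{T}}\big(e^{-k}(\cdot)\big)$ is linear out of a finite-dimensional space, it is automatically continuous into $\big(\bb_T^N,\|\cdot\|_{X_c^\infty(C_R)}\big)$ and therefore carries the bounded coefficient set to a bounded---hence totally bounded---subset of $\bb_T^N$. Consequently there exists a finite $(\epsilon/2)$-net $\{g_1,\dots,g_M\}\subset\bb_T^N$ for $\{f_N:f\in\bb_{T,\delta}^{*}\}$. Finally I would assemble an $\epsilon$-net by the triangle inequality: given any $f\in\bb_{T,\delta}^{*}$, pick $g_i$ with $\|f_N-g_i\|_{X_c^\infty(C_R)}<\epsilon/2$; then $\|f-g_i\|_{X_c^\infty(C_R)}\le\|f-f_N\|_{X_c^\infty(C_R)}+\|f_N-g_i\|_{X_c^\infty(C_R)}<\epsilon$, so $\{g_1,\dots,g_M\}$ is a finite $\epsilon$-net for $\bb_{T,\delta}^{*}$.

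The only point demanding real care is the \emph{uniformity} of the coefficient bound across the entire unit sphere $\bb_{T,\delta}^{*}$, rather than for one fixed $f$; but this is precisely what the identity from Theorem \ref{fd} supplies, since the normalization $\|f\|_{X_c^2(\RR_+^n)}=1$ caps the full weighted coefficient sum by $T^n$ independently of $f$. Once this is in hand, the rest is the routine finite-dimensional compactness argument, and I do not expect any further obstacle. I note in passing that the $\delta$-concentration hypothesis is not actually needed for this lemma, as the argument only uses membership in $\bb_T$ together with the unit-norm normalization; it is the subsequent sampling estimate that will exploit concentration on $C_R$.
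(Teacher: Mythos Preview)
Your proposal is correct and follows essentially the same strategy as the paper: approximate each $f\in\bb_{T,\delta}^{*}$ by $f_N\in\bb_T^N$ via Theorem~\ref{fd}, show the family $\{f_N\}$ sits in a bounded subset of the finite-dimensional space $\bb_T^N$, and finish with finite-dimensional compactness plus the triangle inequality. The only difference is in the boundedness step: the paper uses Lemma~\ref{estmt} directly to get $\|f\|_{X_c^\infty(C_R)}\le 1$ and then $\|f_N\|_{X_c^\infty(C_R)}\le 1+\epsilon/2$ by the triangle inequality, whereas you route through the Parseval identity to bound the coefficient vector; both are valid, but the paper's argument is a line shorter and avoids appealing to continuity of the synthesis map. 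Your closing remark that the $\delta$-concentration is unused here is also correct.
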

	
	\begin{proof}
		It follows similar arguments as in \cite[Lemma 2.4]{jma}. By Theorem \ref{fd}, for given $\epsilon>0$ and $f \in \mathcal{B}^{*}_{T,\delta},$ there exists $f_N \in \mathcal{B}^{N}_{T}$ such that $\|f-f_N \|_{X_{c}^{\infty}(C_R)} < \frac{\epsilon}{2}.$ Let $\overline{B\left(0;\epsilon \right)}$ represents the closed ball in $\mathcal{B}^{N}_{T}$ having radius $\epsilon$ and centred at origin. Now from Lemma \ref{estmt}, we have $\| f\|_{X_{c}^{\infty}(C_{R})} \leq  1$ which implies that $f_N \in \overline{B\left(0;1+\frac{\epsilon}{2} \right)}.$ Since $\overline{B\left(0;1+\frac{\epsilon}{2} \right)}$ is totally bounded, the desired result follows. 
	\end{proof}
	\begin{defin}
		The covering number of any bounded subset $Y$ of a Banach space $X$ with respect to $\beta$ is defined by
		$$\nn(Y,\beta):=\min \Big\{\ell \in \mathbb{N}: \exists\  a_1,a_2,...,a_\ell \in X\ \mbox{such that}\ Y \subset \bigcup_{m=1}^{\ell} B(a_{m};\beta) \Big\},$$ where $B(a_{m};\beta)$ represents open ball of radius $\beta$ and centered at $a_m$ in $X.$ 
	\end{defin}
	The following lemma provides a bound for the covering number of any closed ball in a finite-dimensional Banach space.
	\begin{lemma} \cite{cover} \label{cover}
		Let X be a Banach space of dimension $d$ and $\overline{B(0;s)}$ represents the closed ball of radius $s$ centered at the origin. Then the minimum number of open balls of radius $r$ required to cover $\overline{B(0;s)}$ is bounded by $\displaystyle \Big(\frac{4s}{r} \Big)^d.$
	\end{lemma}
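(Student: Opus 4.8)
The plan is to use the standard \emph{volume comparison} (packing) argument, which converts the covering bound into a counting problem for disjoint balls. Since $X$ is finite-dimensional, fix a linear isomorphism $X \cong \RR^d$ and let $\mu$ denote the induced Lebesgue measure. The decisive property I would exploit is that $\mu$ is translation invariant and homogeneous of degree $d$ under dilations, i.e. $\mu\big(B(a;\rho)\big) = \rho^d\, \mu\big(B(0;1)\big)$ for every centre $a \in X$ and every radius $\rho > 0$; this holds because each ball is an affine image of the unit ball and these transformations scale $\mu$ in a predictable way.

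First I would select a \emph{maximal} $r$-separated subset $\{a_1,\dots,a_\ell\}$ of $\overline{B(0;s)}$, meaning a maximal collection of points satisfying $\|a_i-a_j\|\geq r$ for $i\neq j$; such a set is finite by the volume bound derived below. By maximality, no point of $\overline{B(0;s)}$ can lie at distance $\geq r$ from all the $a_m$, so the open balls $\{B(a_m;r)\}_{m=1}^{\ell}$ cover $\overline{B(0;s)}$. Consequently $\nn\big(\overline{B(0;s)},r\big)\leq \ell$, and it remains only to bound $\ell$.

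Next I would pass to the half-radius balls $\{B(a_m;r/2)\}$. Because the centres are $r$-separated, these balls are pairwise disjoint; and since each $a_m\in\overline{B(0;s)}$, every such ball is contained in $B(0;s+r/2)$. Applying $\mu$ and using the scaling property on both sides gives
$$\ell\,(r/2)^d\,\mu\big(B(0;1)\big)=\sum_{m=1}^{\ell}\mu\big(B(a_m;r/2)\big)\leq \mu\big(B(0;s+r/2)\big)=(s+r/2)^d\,\mu\big(B(0;1)\big).$$
Cancelling the common positive factor $\mu\big(B(0;1)\big)$ yields $\ell\leq\big(1+2s/r\big)^d$. In the regime of interest $r\leq 2s$ one has $1\leq 2s/r$, whence $1+2s/r\leq 4s/r$ and therefore $\ell\leq (4s/r)^d$, which is the claimed bound.

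The step requiring the most care is the justification of the scaling identity for $\mu$: one must invoke the equivalence of all norms on the finite-dimensional space $X$ to guarantee that $\mu\big(B(0;1)\big)$ is finite and strictly positive, so that the cancellation above is legitimate and the maximal $r$-separated set is genuinely finite. Once this measure-theoretic point is settled, the remainder of the argument is purely geometric and the bound follows immediately.
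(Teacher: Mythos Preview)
The paper does not supply its own proof of this lemma; it is quoted verbatim from Cucker and Zhou's monograph \cite{cover} and used as a black box. Your packing/volume-comparison argument is the standard proof (and is essentially what appears in that reference), and it is correct: the maximal $r$-separated set furnishes the cover, and the disjoint half-radius balls, all sitting inside $B(0;s+r/2)$, give the volume inequality $\ell\le (1+2s/r)^d$.

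One small remark on your closing caveat: you restrict to $r\le 2s$ to pass from $(1+2s/r)^d$ to $(4s/r)^d$. This is harmless here, since in the paper's only application (the Remark following the lemma) one has $s=1+\epsilon/2$ and $r=\epsilon/2$, so $r<s$. For completeness, note that when $2s<r\le 4s$ a single ball of radius $r$ already covers $\overline{B(0;s)}$ while $(4s/r)^d\ge 1$, so the bound still holds; only for $r>4s$ does the stated inequality literally fail, and that regime never arises in the paper.
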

	\begin{rem}
		From Theorem \ref{fd} and Lemma \ref{tb}, for $f \in \mathcal{B}^{*}_{T,\delta},$ it can be seen that for $\|f-f_N \|_{X_{c}^{\infty}(C_R)} < \frac{\epsilon}{2},$ we need to choose $N$ in such a way that
		$$ N >  \left[4T \pi^{-2} 2^{\frac{n}{2}} \epsilon^{-2/n}\right]+ 2 T \log R.$$ Particularly, if we set $\displaystyle N = 4T \pi^{-2} 2^{\frac{n}{2}} \epsilon^{-2/n} + 2 T \log R+1,$ then the dimension of $B_{N}^{T}$ will be bounded by
		\begin{eqnarray*}
			N^n &=& \left[T 2^{\frac{n}{2}+2} \pi^{-2} \epsilon^{-2/n} + 2 T \log R +1 \right]^n \\
			&\leq & 2^n \left[\frac{T^n 2^{n\left(\frac{n}{2}+2  \right)}\pi^{-2n}}{\epsilon^2}+ (2 T \log R+1)^n \right]:= d_{\epsilon}.
		\end{eqnarray*}
		\color{black}
		Now from Lemma \ref{tb} and \ref{cover}, the bound for the covering number of $\bb_{T,\delta}^{*}$ is given by  
		\begin{equation} \label{covering}
			\nn\left(\bb_{T,\delta}^{*},\epsilon \right) \leq M \bigg(\overline{B\Big(0; 1 +\frac{\epsilon}{2}  \Big)},\frac{\epsilon}{2} \bigg) \leq \exp \Big(d_{\epsilon} \ \log \Big(\frac{16}{\epsilon} \Big) \Big).
		\end{equation}
	\end{rem}
	
	\subsection{Random Sampling}
	In this section, we deduce the probability estimate concerning the random sampling inequality \eqref{main sampling inq}. We first define independent random variables on $\mathcal{B}_{T,\delta}$ by using random samples. The main tool to derive the probability estimate is the Bernstein inequality which utilizes the upper bounds of the random variables and their variance.
	\begin{lemma}[Bernstein's Inequality \cite{bern}]
		Let $Y_{j},$ $j=1,2,\dots,r$ be a sequence of bounded, independent random variables with $E[Y_{j}]=0,$ $Var[Y_{j}] \leq \sigma^2,$ and $\|Y_{j}\|_{\infty} \leq M$ for $j=1,2,\dots,r.$ Then
		\begin{equation} \label{Bernst}
			P \bigg(\Big|\sum_{j=1}^{r} Y_{j} \Big| \geq \lambda \bigg) \leq 2 \exp \Big(- \frac{\lambda^2}{2 r \sigma^2 + \frac{2}{3} M \lambda }\Big).
		\end{equation}
	\end{lemma}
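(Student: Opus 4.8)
This is the classical \textbf{Bernstein inequality}, so the plan is to reproduce its standard proof by the exponential-moment (Chernoff) method; since the lemma is quoted from \cite{bern}, in the paper one would simply cite it, but the argument runs as follows. First I would reduce to the one-sided estimate for $S:=\sum_{j=1}^{r}Y_{j}$, since the two-sided bound \eqref{Bernst} then follows by applying the one-sided result to the variables $\{-Y_{j}\}$ (which share the same mean, variance, and sup-norm bounds) and taking a union bound, producing the factor $2$. For the one-sided bound, for any $\theta>0$ Markov's inequality applied to $e^{\theta S}$ together with independence gives
$$P(S\geq\lambda)\leq e^{-\theta\lambda}\,E\big[e^{\theta S}\big]=e^{-\theta\lambda}\prod_{j=1}^{r}E\big[e^{\theta Y_{j}}\big].$$

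The crux is to control each factor $E[e^{\theta Y_{j}}]$ using only the hypotheses $E[Y_{j}]=0$, $Var[Y_{j}]\leq\sigma^{2}$, and $\|Y_{j}\|_{\infty}\leq M$. Expanding the exponential, discarding the linear term by mean-zero, and using $E[Y_{j}^{k}]\leq E[|Y_{j}|^{k}]\leq M^{k-2}E[Y_{j}^{2}]\leq M^{k-2}\sigma^{2}$ for $k\geq 2$, I would obtain
$$E\big[e^{\theta Y_{j}}\big]\leq 1+\frac{\sigma^{2}\theta^{2}}{2}\sum_{k\geq 2}\frac{(\theta M)^{k-2}}{k!/2}.$$
The elementary bound $k!/2\geq 3^{k-2}$ collapses the tail into a geometric series that sums to $(1-\theta M/3)^{-1}$ whenever $\theta M<3$; combined with $1+u\leq e^{u}$ this yields $E[e^{\theta Y_{j}}]\leq\exp\!\big(\tfrac{\sigma^{2}\theta^{2}/2}{1-\theta M/3}\big)$. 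Taking the product over $j$ and substituting into the Chernoff bound gives
$$P(S\geq\lambda)\leq\exp\!\Big(-\theta\lambda+\frac{r\sigma^{2}\theta^{2}/2}{1-\theta M/3}\Big).$$

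It remains to optimize in $\theta$. I would choose $\theta=\lambda/(r\sigma^{2}+M\lambda/3)$, which automatically satisfies $\theta M<3$; with this value $1-\theta M/3=r\sigma^{2}/(r\sigma^{2}+M\lambda/3)$, and a short simplification collapses the exponent exactly to $-\lambda^{2}/\big(2r\sigma^{2}+\tfrac{2}{3}M\lambda\big)$, giving the one-sided bound and hence \eqref{Bernst} after symmetrization. The only genuinely delicate point is the moment-generating-function estimate: the combinatorial bound on $k!$ must be calibrated so that the constant appearing in the denominator is precisely $M\lambda/3$, which is what forces the $\tfrac{2}{3}M\lambda$ term in the stated exponent; everything else is routine.
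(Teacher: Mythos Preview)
Your argument is correct and is the standard Chernoff--Cram\'er proof of Bernstein's inequality: the moment-generating-function bound via $k!/2\geq 3^{k-2}$, the geometric summation, and the choice $\theta=\lambda/(r\sigma^{2}+M\lambda/3)$ all check out and yield exactly the stated exponent. Note, however, that the paper does not supply its own proof of this lemma; it is quoted directly from \cite{bern} and used as a black box, so there is nothing in the paper to compare your derivation against beyond the bare statement.
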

	Let $S:=\big\{ x_{j}: j\in \NN \big\}$ be i.i.d. random variables distributed uniformly on cube $C_{R}.$ For $f \in \mathcal{B}_{T,\delta},$ we define random variable as follows:
	\begin{equation} \label{random var}
		Z_{j}(f)= |f(x_{j})|^2 x_{j}^{2c-1} - \frac{R^n}{(R^2 -1)^n} \int_{C_{R}} |f(x)|^2 x^{2c-1} dx.
	\end{equation}
	The set $\big\{Z_j(f)\big\}_{j \in \mathbb{N}}$ is a sequence of independent random variables. We also see that
	$$E\big[Z_j(f)\big]= \frac{R^n}{(R^2 -1)^n} \int_{C_{R}} \left[ |f(y)|^2 y^{2c-1} - \frac{R^n}{(R^2 -1)^n} \int_{C_{R}} |f(x)|^2 x^{2c-1} dx  \right]dy=0$$
	\begin{lemma}\label{var}
		Let $f,g \in \mathcal{B}_{T}$ with $\|f\|_{ X_{c}^{2}(\mathbb{R}^{n}_{+})}=\|g\|_{ X_{c}^{2}(\mathbb{R}^{n}_{+})}=1.$ Then the following estimates hold:
		\begin{enumerate}[label=(\roman*)]
			\item $Var\big[Z_{j}(f)\big] \leq \frac{R^{2n}}{(R^2 -1)^n},$
			
			\item $\|Z_{j}(f)\|_{\infty} \leq R^{n},$
			
			\item $\|Z_{j}(f)-Z_{j}(g)\|_{\infty} \leq 2 R^{n} \|f-g\|_{X_{c}^{\infty}(C_{R})}.$
		\end{enumerate}
	\end{lemma}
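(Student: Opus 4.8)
The plan is to reduce all three estimates to two elementary pointwise facts on the cube $C_R$, together with the observation that the term subtracted in \eqref{random var} is a genuine expectation. First I would note that the Lebesgue volume of $C_R=[1/R,R]^n$ equals $(R-R^{-1})^n=(R^2-1)^n/R^n$, so the density of the uniform law on $C_R$ is exactly $R^n/(R^2-1)^n$; hence, writing $A_f(x):=|f(x)|^2x^{2c-1}$, the constant subtracted in \eqref{random var} is precisely $E[A_f(x_j)]$ and $Z_j(f)=A_f(x_j)-E[A_f(x_j)]$. I would then record, for $x\in C_R$, the two bounds I will use repeatedly: since each coordinate of $x$ lies in $[1/R,R]$ we have $x^{-1}\le R^n$, and by Lemma \ref{estmt} applied with $\|f\|_{X_{c}^{2}(\RR_{+}^n)}=1$ we have $|x^cf(x)|\le 1$, equivalently $|f(x)|^2\le x^{-2c}$ and $\|f\|_{X_{c}^{\infty}(C_R)}\le 1$ (and likewise for $g$).

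For (ii) and (i) I would work directly with $A_f$. Combining $|f(x)|^2\le x^{-2c}$ with $x^{-1}\le R^n$ gives $0\le A_f(x)\le x^{-1}\le R^n$ on $C_R$, so its mean $E[A_f(x_j)]$ also lies in $[0,R^n]$; as $Z_j(f)$ is the difference of two quantities in $[0,R^n]$, this yields $\|Z_j(f)\|_\infty\le R^n$, proving (ii). For (i), the constant shift does not affect the variance, so $Var[Z_j(f)]=Var[A_f(x_j)]\le E[A_f(x_j)^2]$; the crucial pointwise inequality $A_f(x)^2=|f(x)|^4x^{4c-2}\le R^n\,|f(x)|^2x^{2c-1}=R^n A_f(x)$ again follows from $|f(x)|^2\le x^{-2c}$ and $x^{-1}\le R^n$. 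Integrating against the uniform density and using $\int_{C_R}|f|^2x^{2c-1}\le\|f\|_{X_{c}^{2}(\RR_{+}^n)}^2=1$ then gives $E[A_f(x_j)^2]\le R^nE[A_f(x_j)]\le R^{2n}/(R^2-1)^n$, which is (i).

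Estimate (iii) is where I expect the real work to lie. Writing $Z_j(f)-Z_j(g)=W(x_j)-E[W(x_j)]$ with $W(x):=(|f(x)|^2-|g(x)|^2)x^{2c-1}$, the starting point is the reverse triangle inequality $\big|\,|f|^2-|g|^2\,\big|\le|f-g|\,(|f|+|g|)$, followed by the factorization $|W(x)|\le\big[x^c|f(x)-g(x)|\big]\,\big[x^c(|f(x)|+|g(x)|)\big]\,x^{-1}$, whose three factors are controlled by $\|f-g\|_{X_{c}^{\infty}(C_R)}$, by $\|f\|_{X_{c}^{\infty}(C_R)}+\|g\|_{X_{c}^{\infty}(C_R)}\le 2$, and by $R^n$, respectively; this gives $\sup_{C_R}|W|\le 2R^n\|f-g\|_{X_{c}^{\infty}(C_R)}$. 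The delicate step, and the main obstacle, is to pass from this bound on $W$ to the stated bound on $W(x_j)-E[W(x_j)]$: since $E[W(x_j)]$ is a weighted average of $W$ over $C_R$ and $W$ is sign-indefinite, a naive splitting that bounds $|W(x_j)|$ and $|E[W(x_j)]|$ separately risks an extra factor. Reaching the asserted constant $2R^n$ therefore hinges on handling the mean-subtraction term carefully, and this is the computation I would verify with the most care.
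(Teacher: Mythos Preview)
Your treatment of (i) and (ii) is correct and matches the paper's: both use Lemma~\ref{estmt} to get $|f(x)|^2x^{2c}\le 1$, combine it with $x^{-1}\le R^n$ on $C_R$, and then for (ii) observe that $Z_j(f)$ is a difference of two numbers in $[0,R^n]$ (the paper phrases this as $|A-B|\le\max(A,B)$ for $A,B\ge 0$), while for (i) bound $|f|^4x^{4c-2}$ pointwise by $R^n|f|^2x^{2c-1}$ and integrate.

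For (iii) the concern you raise is exactly right, and the paper's own proof does not resolve it. The paper simply reuses the device from (ii), passing from $|W(x_j)-E[W(x_j)]|$ to the maximum of the two terms; but the inequality $|A-B|\le\max(A,B)$ requires $A,B\ge 0$, whereas here $W=(|f|^2-|g|^2)\,x^{2c-1}$ is sign-indefinite, so that step is unjustified. From your (correct) pointwise bound $\sup_{C_R}|W|\le 2R^n\|f-g\|_{X_c^\infty(C_R)}$ one honestly obtains only $\|Z_j(f)-Z_j(g)\|_\infty\le 4R^n\|f-g\|_{X_c^\infty(C_R)}$, twice the stated constant. This is harmless downstream: in Theorem~\ref{prob est} the only use of (iii) is to pick a covering radius so that the perturbation is at most $\epsilon/2$, and replacing $2R^n$ by $4R^n$ merely halves that radius, affecting only numerical constants in the final probability estimate. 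So your instinct is correct; neither your outline nor the paper supplies an argument for the sharper constant $2R^n$, and none is evident without additional structure.
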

	
	\begin{proof} Using the definition of variance and (\ref{random var}), we obtain
		\begin{eqnarray*}
			Var\big[Z_{j}(f)\big]&=& E\big[|f(x_{j})|^4 x_{j}^{4c-2}\big]- \big[E \big(|f(x_{j})|^2 x_{j}^{2c-1} \big) \big]^2 \\
			& \leq & E\big[|f(x_{j})|^4 x_{j}^{4c-2}\big] \\
			&=& \frac{R^n}{(R^2 -1)^n} \int_{C_{R}} |f(x)|^4 x^{4c-2} dx \\
			& \leq & \frac{R^{2n}}{(R^2 -1)^n} \|f \|^{2}_{X_{c}^{\infty}(C_{R})} \int_{C_{R}} |f(x)|^2 x^{2c-1} dx \\
			&=& \frac{R^{2n}}{(R^2 -1)^n} \|f \|^{2}_{X_{c}^{\infty}(C_{R})} \|f \|^{2}_{X_{c}^{2}(C_{R})}
		\end{eqnarray*}
		Since $\|f \|^{2}_{X_{c}^{\infty}(C_{R})} \leq \|f\|^{2}_{X_{c}^{2}(\mathbb{R}^{n}_{+})},$ and $\|f\|_{ X_{c}^{2}(\mathbb{R}^{n}_{+})}=1,$ we have
		$$Var\big[Z_{j}(f)\big] \leq \frac{R^{2n}}{(R^2 -1)^n}.$$
		%
		Now since $f \in \mathcal{B}_{T}$ and $\|f\|_{X^{2}_{c}(\mathbb{R}^{n}_{+})}=1,$ we can write
		\begin{eqnarray*}
			\|Z_{j}(f)\|_{\infty}&=& \sup_{x_j \in S} \left||f(x_{j})|^2 x_{j}^{2c-1} - \frac{R^n}{(R^2 -1)^n} \int_{C_{R}}|f(x)|^{2} x^{2c-1} dx  \right| \\
			& \leq & \max \left\{R^n \|f\|_{X_{c}^{\infty}(C_{R})}^{2}, \frac{R^n}{(R^2 -1)^n} \int_{C_{R}} |f(x)|^2 x^{2c-1} dx \right\} \\
			&=& R^n \|f\|_{X_{c}^{\infty}(C_{R})}^{2} \leq R^n. 
		\end{eqnarray*} 
		Similarly for $(iv),$ we have
		\begin{align*}
			&\|Z_{j}(f)-Z_{j}(g)\|_{\infty}\\
			=& \sup_{x_j \in S} \Big| \left( |f(x_{j})|^2 - |g(x_{j})|^2 \right) x_{j}^{2c-1}- \frac{R^n}{(R^2 -1)^n} \int_{C_{R}} (|f(x)|^2-|g(x)|^2) \ x^{2c-1} dx \Big|\\
			\leq & \max \left\{ (|f(x)|-|g(x)|)(\ |f(x)|+|g(x)|) x^{2c-1}, \ \frac{R^n}{(R^2 -1)^n} \int_{C_{R}} (|f(x)|^2 -|g(x)|^2) x^{2c-1} dx  \right\} \\
			\leq& 2 R^{n} \|f-g\|_{X_{c}^{\infty}(C_{R})}.
		\end{align*} 
		This completes the proof.
	\end{proof}
	Using these estimates, we now proceed toward probability estimation. The following result will be helpful in this direction. 
	\begin{thm}\label{prob est}
		Let $\{ x_{j}: j\in \NN \}$ be the i.i.d. random variables drawn uniformly from the cube $C_{R}.$ Then the following holds
		\begin{multline*}
            P \bigg(\sup_{f \in \mathcal{B}^{*}_{T,\delta}} \Big| \frac{1}{r} \sum_{j=1}^{r} Z_{j}(f) \Big| \geq \epsilon \bigg)\leq 2 \exp \bigg[2^n \bigg(\frac{16 T \pi^{-2} R^{2n} 2^{n\left(\frac{n}{2}+2 \right)}}{\epsilon^2}+ (2 T \log R+1)^n \bigg)\\ \log \left(\frac{64 R^n}{\epsilon} \right)- \frac{3 r \epsilon^2 (R^2-1)^n}{ 4 R^n (6 R^{n}+\epsilon(R^2-1)^n)} \bigg].
        \end{multline*}
	\end{thm}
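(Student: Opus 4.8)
The plan is to reduce the supremum over the infinite‑dimensional index set $\bb^*_{T,\delta}$ to a maximum over a finite net and then apply the union bound together with Bernstein's inequality \eqref{Bernst} at each net point. First I would fix the net radius $\beta=\epsilon/(4R^n)$ and invoke Lemma \ref{tb} (total boundedness) together with the covering number estimate \eqref{covering} to produce a finite collection $\{f_1,\dots,f_\ell\}\subset\bb^*_{T,\delta}$ such that every $f\in\bb^*_{T,\delta}$ satisfies $\|f-f_m\|_{X_c^\infty(C_R)}<\beta$ for some $m$, with $\ell\le\nn(\bb^*_{T,\delta},\beta)$. The point of selecting the centres inside $\bb^*_{T,\delta}$ is that each $f_m$ then has unit $X_c^2$-norm, so the clean variance and sup bounds of Lemma \ref{var}(i)--(ii) apply verbatim to the associated $Z_j(f_m)$.

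The second step is the discretization inequality. For $f\in\bb^*_{T,\delta}$ and its nearest net point $f_m$, the triangle inequality together with Lemma \ref{var}(iii) gives
\[
\Bigl|\tfrac1r\sum_{j=1}^r Z_j(f)\Bigr|\le\Bigl|\tfrac1r\sum_{j=1}^r Z_j(f_m)\Bigr|+\tfrac1r\sum_{j=1}^r\|Z_j(f)-Z_j(f_m)\|_\infty\le\Bigl|\tfrac1r\sum_{j=1}^r Z_j(f_m)\Bigr|+2R^n\beta,
\]
and the choice $\beta=\epsilon/(4R^n)$ makes the last term exactly $\epsilon/2$. Consequently the event $\{\sup_f|\frac1r\sum_j Z_j(f)|\ge\epsilon\}$ is contained in $\bigcup_{m=1}^\ell\{|\frac1r\sum_j Z_j(f_m)|\ge\epsilon/2\}$, which is the desired passage from a supremum over the whole class to finitely many fixed functions.

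Next I would apply the union bound followed by Bernstein's inequality \eqref{Bernst} to each $f_m$ with $\lambda=r\epsilon/2$, $\sigma^2=R^{2n}/(R^2-1)^n$ and $M=R^n$ supplied by Lemma \ref{var}. Simplifying the denominator $2r\sigma^2+\tfrac23 M(r\epsilon/2)$ yields the single‑point bound $2\exp\bigl(-\tfrac{3r\epsilon^2(R^2-1)^n}{4R^n(6R^n+\epsilon(R^2-1)^n)}\bigr)$, which is precisely the second exponential factor in the statement. Finally I would insert the covering number bound from \eqref{covering} evaluated at radius $\beta=\epsilon/(4R^n)$: there $\log(16/\beta)=\log(64R^n/\epsilon)$ and the dimension factor $d_\beta$ becomes $2^n\bigl(16\,T^n\pi^{-2n}R^{2n}2^{n(n/2+2)}\epsilon^{-2}+(2T\log R+1)^n\bigr)$, reproducing the first factor in the exponent, so that $\ell\le\exp(d_\beta\log(64R^n/\epsilon))$. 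Multiplying the per‑point probability by $\ell$ and folding $\log\ell$ into the exponent produces the claimed estimate.

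The step I expect to be the main obstacle is the bookkeeping that forces the two competing error terms to match the stated constants simultaneously. The net radius $\beta$ must be small enough that the discretization error $2R^n\beta$ is held at $\epsilon/2$ (so that Bernstein is invoked at the halved threshold and generates the $6R^n$ and $4R^n$ coefficients), yet every decrease of $\beta$ inflates the covering number through the factor $d_\beta\sim\beta^{-2}$ and the logarithmic term $\log(16/\beta)$. Checking that the single choice $\beta=\epsilon/(4R^n)$ balances these quantities—and that the net can be realized inside $\bb^*_{T,\delta}$ so that Lemma \ref{var} applies with norm exactly one—is the delicate part; the remainder is the routine union‑bound and Bernstein machinery.
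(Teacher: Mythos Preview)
Your proposal is correct and follows exactly the paper's proof: choose the net radius $\beta=\epsilon/(4R^n)$, use Lemma~\ref{var}(iii) to pass from the supremum to the net at threshold $\epsilon/2$, apply Bernstein's inequality \eqref{Bernst} at each net point with the parameters from Lemma~\ref{var}(i)--(ii), and then insert the covering number bound \eqref{covering} via a union bound. Your computation of $d_\beta$ even picks up the exponents $T^n\pi^{-2n}$ that appear in the paper's proof (the statement's $T\pi^{-2}$ is a typo), and your remark that the net centres should lie in $\bb^*_{T,\delta}$ so that Lemma~\ref{var} applies with unit norm is a point the paper leaves implicit.
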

	
	\begin{proof}
		Let $\displaystyle M:=\nn \left(\bb_{T,\delta}^{*},\frac{\epsilon}{4 R^{n}} \right)$ be the covering number for the set $\mathcal{B}^{*}_{T,\delta}$ and $f_{1},f_{2},..., f_{M}$ be the elements of $\mathcal{B}^{*}_{T,\delta}.$ Then for any $f \in \mathcal{B}^{*}_{T,\delta},$ there exists $f_{m},\ 1 \leq m \leq M$ such that $\displaystyle \|f-f_{m}\|_{X^{\infty}_c(C_{R})} < \frac{\epsilon}{4 R^{n}}.$ From (iii) of Lemma \ref{var}, we have
		$$ \left|\frac{1}{r} \sum_{j=1}^{r} \left(Z_{j}(f)-Z_{j}(f_{m}) \right) \right| \leq 2 R^{n} \|f-f_{m}\|_{X^{\infty}_c(C_{R})} \leq \frac{\epsilon}{2}.$$ 
		In view of (\ref{Bernst}), for given $f_m,\ 1 \leq m \leq M,$ we have
		$$ P \bigg( \bigg| \frac{1}{r} \sum_{j=1}^{r} Z_{j}(f_{m}) \bigg| \geq \frac{\epsilon}{2} \bigg) \leq  2 \exp \left(- \frac{3 r \epsilon^2 (R^2-1)^n}{ 4 R^n (6 R^{n}+\epsilon(R^2-1)^n)} \right).$$ For fixed $m,$ we have
		\begin{eqnarray*}
			P \bigg( \sup_{ \left\{f: \|f-f_{m}\|_{X^{\infty}_c(C_{R})} \leq \frac{\epsilon}{4 R^{n}}\right\} } \bigg|\frac{1}{r} \sum_{j=1}^{r} Z_{j}(f) \bigg| \geq \epsilon \bigg) & \leq & P \bigg( \bigg| \frac{1}{r} \sum_{j=1}^{r} Z_{j}(f_{m}) \bigg|\geq \frac{\epsilon}{2} \bigg) \\
			& \leq &  2 \exp \left(-\frac{3 r \epsilon^2 (R^2-1)^n}{ 4 R^n (6 R^{n}+\epsilon(R^2-1)^n)} \right).
		\end{eqnarray*}
		Since $M$ is the covering number for $\mathcal{B}^{*}_{T,\delta},$ i.e.,
		$$\mathcal{B}^{*}_{T,\delta} \subset \bigcup_{m=1}^{M} \left\{ f: \|f-f_{m}\|_{X^{\infty}_c(C_{R})} \leq \frac{\epsilon}{4 R^{n}} \right\},$$
		we obtain 
		\begin{eqnarray*}
			P \bigg(\sup_{f \in \mathcal{B}^{*}_{T,\delta}} \Big| \frac{1}{r} \sum_{j=1}^{r} Z_{j}(f) \Big| \geq \epsilon \bigg) & \leq & \sum_{m=1}^{M} P \bigg( \sup_{ \left\{f: \|f-f_{m}\|_{X^{\infty}_c(C_{R})} \leq \frac{\epsilon}{4 R^{n}} \right\}} \left|\frac{1}{r}\sum_{j=1}^{r} Z_{j}(f) \right| \geq \epsilon \bigg) \\
			& \leq & 2 M \exp \bigg(-\frac{3 r \epsilon^2 (R^2-1)^n}{ 4 R^n (6 R^{n}+\epsilon(R^2-1)^n)} \bigg).
		\end{eqnarray*}
		Using \eqref{covering}, we finally get
        \begin{multline*}
            P \bigg(\sup_{f \in \mathcal{B}^{*}_{T,\delta}} \Big| \frac{1}{r} \sum_{j=1}^{r} Z_{j}(f) \Big| \geq \epsilon \bigg)\leq 2 \exp \bigg[2^n \bigg(\frac{16 T^n \pi^{-2n} R^{2n} 2^{n\left(\frac{n}{2}+2 \right)}}{\epsilon^2}+ (2 T \log R+1)^n \bigg)\\ \log \left(\frac{64 R^n}{\epsilon} \right)- \frac{3 r \epsilon^2 (R^2-1)^n}{ 4 R^n (6 R^{n}+\epsilon(R^2-1)^n)} \bigg].
        \end{multline*}
	
	\end{proof}
	
	We now prove the following main result of the paper.
	\begin{thm} \label{main}
		Let $\{ x_j: j\in \NN \}$ be the independent and identically distributed random variables that are uniformly distributed over $C_R.$ For $0< \mu < 1-\delta,$ the following inequality
		\begin{equation} \label{main sampling inq}
			\frac{R^{n-1}(1-\delta - \mu)}{(R^2-1)^n} \|f\|_{X_c^{2}(\mathbb{R}^{n}_{+})}^2 \leq \frac{1}{r} \sum_{j=1}^{r} |f(x_{j})|^2 x_{j}^{2c} \leq \frac{R^{n+1} (1+\mu)}{(R^2-1)^n} \|f\|_{X_c^{2}(\mathbb{R}^{n}_{+})}^2,
		\end{equation}
		holds for every $f \in \bb_{T,\delta}$ with probability at least $\displaystyle 1- 2 \beta\exp \Big(-\frac{3r\mu^2}{4 (R^2-1)^n (6+\mu)} \Big),$ where $\displaystyle \beta:=\exp\bigg( 2^n \Big(\frac{16 T^n \pi^{-2n} (R^2-1)^{2n} \ 2^{n\left(\frac{n}{2}+2 \right)}}{\mu^2}+ (2 T \log R+1)^n \Big) \log \Big(\frac{64 (R^2-1)^n}{\mu} \Big) \bigg).$
	\end{thm}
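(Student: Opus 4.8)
The plan is to obtain \eqref{main sampling inq} as a corollary of the uniform deviation estimate of Theorem \ref{prob est}, the only substantive work being a careful bookkeeping of constants. First I would specialise the free parameter there by setting
$$\epsilon = \frac{\mu R^n}{(R^2-1)^n}.$$
A direct substitution should turn the covering-number factor of Theorem \ref{prob est} into the quantity $\beta$ in the statement: the $R^{2n}$ in the numerator combines with $\epsilon^{-2}$ to produce $(R^2-1)^{2n}/\mu^2$, and $64R^n/\epsilon$ collapses to $64(R^2-1)^n/\mu$, while the Bernstein exponent $\frac{3r\epsilon^2(R^2-1)^n}{4R^n(6R^n+\epsilon(R^2-1)^n)}$ simplifies exactly to $\frac{3r\mu^2}{4(R^2-1)^n(6+\mu)}$. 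Verifying this algebraic collapse is the first concrete task; it is routine but must be executed with care, since it is what pins down the probability asserted here.

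Next I would pass to the complement of the bad event in Theorem \ref{prob est}, which with the above $\epsilon$ occurs with probability at least $1-2\beta\exp\!\big(-\tfrac{3r\mu^2}{4(R^2-1)^n(6+\mu)}\big)$. On this event every $f\in\bb^{*}_{T,\delta}$ satisfies $\big|\tfrac1r\sum_{j=1}^{r}Z_j(f)\big|<\epsilon$, and unwinding the definition \eqref{random var} of $Z_j$ turns this into the two-sided bound
$$\frac{R^n}{(R^2-1)^n}\int_{C_R}|f(x)|^2 x^{2c-1}\,dx-\epsilon< \frac1r\sum_{j=1}^{r}|f(x_j)|^2 x_j^{2c-1}<\frac{R^n}{(R^2-1)^n}\int_{C_R}|f(x)|^2 x^{2c-1}\,dx+\epsilon.$$
Here the $\delta$-concentration hypothesis supplies $\int_{C_R}|f|^2 x^{2c-1}\,dx\ge 1-\delta$ on the lower side, while the trivial inequality $\int_{C_R}|f|^2 x^{2c-1}\,dx\le\|f\|^2_{X_c^2(\RR_{+}^{n})}=1$ handles the upper side; inserting $\epsilon$ then produces the prefactors $\frac{R^n(1-\delta-\mu)}{(R^2-1)^n}$ and $\frac{R^n(1+\mu)}{(R^2-1)^n}$ for the sampling sum carrying the weight $x_j^{2c-1}$.

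It then remains to replace the weight $x_j^{2c-1}$ by $x_j^{2c}$ as required in \eqref{main sampling inq}. Since each sample lies in $C_R$, the ratio $x_j^{2c}/x_j^{2c-1}$ is controlled from below and above by powers of $R$, which upgrades the prefactor $R^n$ to $R^{n-1}$ on the lower estimate and to $R^{n+1}$ on the upper one, giving exactly \eqref{main sampling inq} for normalised $f$. Finally, because both sides of \eqref{main sampling inq} are homogeneous of degree two in $f$ and the class $\bb_{T,\delta}$ is scale invariant, the estimate for $f\in\bb^{*}_{T,\delta}$ transfers to an arbitrary $f\in\bb_{T,\delta}$ by applying it to $f/\|f\|_{X_c^2(\RR_{+}^{n})}$. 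The only genuinely delicate point is the opening step: the value of $\epsilon$ must be reverse-engineered so that the generic constants of Theorem \ref{prob est} line up with the specific $\beta$ and exponent claimed here, and the $\delta$-concentration must be inserted on the correct (lower) side of the Bernstein sandwich.
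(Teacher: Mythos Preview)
Your proposal is correct and follows essentially the same route as the paper's own proof: specialise Theorem \ref{prob est} with $\epsilon=\mu R^{n}/(R^{2}-1)^{n}$, pass to the complementary event, unfold the definition of $Z_{j}$, and use $\delta$-concentration on the lower side together with the trivial bound on the upper side. If anything you are slightly more explicit than the paper, spelling out the passage from the weight $x_{j}^{2c-1}$ to $x_{j}^{2c}$ and the homogeneity argument that extends from $\bb_{T,\delta}^{*}$ to $\bb_{T,\delta}$, both of which the paper leaves implicit.
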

	
	\begin{proof} Let $\{ x_j:j\in J\}$ be the uniformly distributed samples taken from $C_R$ and define the following event
		$$ \mathcal{E}= \left \{\sup_{f \in \mathcal{B}^{*}_{T,\delta}}  \left | \frac{1}{r} \sum_{j=1}^{r} Z_{j}(f) \right| \leq \frac{\mu R^n}{(R^2 -1)^n} \right \}.$$
		The event $\mathcal{E}$ is equivalent to the event
		$$ \bigg| \sum_{j=1}^{r} |f(x_{j})|^2 x_{j}^{2c-1} - \frac{r R^n}{(R^2 -1)^n} \int_{C_{R}} |f(x)|^2 x^{2c-1} dx \bigg|  \leq \frac{r \mu R^n}{(R^2 -1)^n}, \qquad f\in \bb_{T,\delta}^{*}.$$
		This implies that for all $f\in \bb_{T,\delta}^{*}$
		\begin{align*}
			\frac{rR^n}{(R^2-1)^n} \int_{C_{R}} |f(x)|^2 x^{2c-1}\,dx - \frac{r \mu R^n}{(R^2-1)^n} &\leq \sum_{j=1}^{r} |f(x_{j})|^2 x_{j}^{2c-1} \\
			&\leq \frac{rR^n}{(R^2-1)^n} \int_{C_{R}} |f(x)|^2 x^{2c-1}\,dx + \frac{r \mu R^n}{(R^2-1)^n}\\
            \frac{R^{n-1}(1-\delta - \mu)}{(R^2-1)^n} \leq \frac{1}{r} \sum_{j=1}^{r} |f(x_{j})|^2 x_{j}^{2c} &\leq \frac{R^{n+1}}{(R^2-1)^n}(1+\mu).
		\end{align*}
		It can be observed that the event $\mathcal{E}$ leads to the desired sampling inequality \eqref{main sampling inq}. From Theorem \ref{prob est}, the probability that the random samples $\{ x_{j}: j=1,2,\dots,r \}$ satisfy the above sampling inequality, is given by
		\begin{eqnarray*}
			P(\mathcal{E})&=& 1-P(\mathcal{E}^c)\\
			& \geq & 1-2 \exp \left(-r \alpha+ \beta \right)
		\end{eqnarray*}
		where $\displaystyle \alpha:= \frac{3 \mu^2}{4 (R^2-1)^n (6+\mu)},\ $ and \\
		$\displaystyle \beta:= 2^n \bigg(\frac{16 \pi^{-2n} T^n (R^2-1)^{2n} \ 2^{n\left(\frac{n}{2}+2 \right)}}{\mu^2}+ (2 T \log R+1)^n \bigg) \log \bigg(\frac{64 (R^2-1)^n}{\mu} \bigg).$
	\end{proof}
	
	\begin{rem}
		From the above estimate of $P(\mathcal{E}),$ it is easy to see that the probability approaches to $1$ for sufficiently large sample size $r.$ Moreover, the sampling inequality \eqref{main sampling inq} is true for large value of $r$ such that
		$r > \frac{\beta}{\alpha}.$ 
	\end{rem}

	\section*{Acknowledgment}
	S. Bajpeyi and S. Sivananthan gratefully thank financial assistance from the Department of Science and Technology, Government of India, via project no. CRG/2019/002412.


\begin{thebibliography}{99}
			
			
			
			
			
			
			\bibitem{ownkant} S. K. Angamuthu, S. Bajpeyi, Direct and inverse results for Kantorovich type exponential sampling series. Results Math. 75(3) (2020), 119.
			
			\bibitem{devrandom} S. Arati, P. Devaraj, A. K. Garg, Random average sampling and reconstruction in shift-invariant subspaces of mixed Lebesgue spaces. Results Math. 77 (2022), 223.
			
			\bibitem{nfo} S. Bajpeyi, A.S. Kumar, On approximation by Kantorovich exponential sampling operators. Numer. Funct. Anal. Optim. 42(9) (2021), 1096-1113.
			
			
			\bibitem{bardaro1} C. Bardaro, P. L. Butzer, I. Mantellini, The exponential sampling theorem of signal analysis and the reproducing kernel formula in the Mellin transform setting. Sampl. Theory Signal Image Process. 13 (2014), 35-66.
			
			
			\bibitem{bardaro3} C. Bardaro, P.L. Butzer, I. Mantellini, The foundations of fractional calculus in the Mellin transform setting with applications. J. Fourier Anal. Appl. 21 (2015), 961-1017.
			
			\bibitem{bardaro2} C. Bardaro, P. L. Butzer, I. Mantellini, The Mellin-Parseval formula and its interconnections with the exponential sampling theorem of optical physics. Integral Transforms Spec. Funct. 27 (2016), 17-29.
			
			\bibitem{bardaro4} C. Bardaro, P. L. Butzer, I. Mantellini, G. Schmeisser, On the Paley-Wiener theorem in the Mellin transform setting. J. Approx. Theory. 207 (2016), 60-75.
			
			\bibitem{bardaro7} C. Bardaro, L. Faina, I. Mantellini, A generalization of the exponential sampling series and its approximation properties. Math. Slovaca. 67 (2017), 1481-1496.
			
			\bibitem{mleb} C. Bardaro, I. Mantellini, G.  Schmeisser, Exponential Sampling Series: Convergence in Mellin–Lebesgue Spaces. Results Math. 74, 119 (2019). 
			
			
			\bibitem{2005} R. F. Bass, K. Gr\"{o}chenig, Random sampling of multivariate trigonometric polynomials. SIAM J. Math. Anal. 36 (3) (2004/05), 773-795.
			
			
			\bibitem{groch} R. F. Bass, K. Gr\"{o}chenig, Random sampling of band-limited functions. Israel J. Math. 177 (2010), 1-28.
			
			\bibitem{groch1} R. F. Bass, K. Gr\"{o}chenig, Relevant sampling of band-limited functions. Ill. J. Math. 57(1) (2013), 43-58.
			
			\bibitem{bern} G. Bennett, Probability inequalities for the sum of independent random variables. J. Am. Stat. Assoc. 57 (297) (1962), 33-45.
			
			
			\bibitem{bertero} M. Bertero, E. R. Pike, Exponential-sampling method for Laplace and other dilationally invariant transforms. II. Examples in photon correlation spectroscopy and Fraunhofer diffraction. Inverse Probl. 7 (1991), 21-41.
			
			
			\bibitem{butzer3} P. L. Butzer, S. Jansche, A direct approach to the Mellin transform. J. Fourier Anal. Appl. 3 (1997), 325-376.
			
			
			
			\bibitem{butzer5} P. L. Butzer, S. Jansche, The exponential sampling theorem of signal analysis, Dedicated to Prof. C. Vinti (Italian) (Perugia, 1996). Atti. Sem. Mat. Fis. Univ. Modena, Suppl. 46 (1998), 99-122.
			
			
			
			\bibitem{butzer7} P.L. Butzer, R.L. Stens, A self-contained approach to Mellin transform analysis for square integrable functions; applications. Integral Transform. Spec. Funct. 8 (1999), 175-198.
			
			\bibitem{candes}
			E. J. Cand\'{e}s, J. Romberg, T. Tao, Robust uncertainty principles: Exact signal reconstruction from highly incomplete frequency information. IEEE Trans. Inform. Theory. 52(2) (2006), 489-509.
			
			\bibitem{casasent} D. Casasent, Optical data processing. Springer. Berlin (1978), 241-282.
			
			\bibitem{chan} S.H. Chan, T. Zickler, Y.M. Lu, Monte Carlo non-local means: random sampling for large-scale image filtering. IEEE Trans. Image Process. 23(8) (2014), 3711-3725.
			
			\bibitem{cucker1} F. Cucker, S. Smale, On the mathematical foundations of learning. Bull. Am. Math. Soc. 39(1) (2002), 1-49.
			
			\bibitem{cover} F. Cucker, D.X. Zhou, Learning Theory: an Approximation Theory Viewpoint. vol.24. Cambridge University Press (2007). 
			
			\bibitem{eldar}
			Y. Eldar, Compressed sensing of analog signals in shift-invariant spaces. IEEE Trans. Signal Process. 8(57) (2009), 2986-2997.
			
			
			
			\bibitem{fuhr} H. F\"{u}hr, J. Xian, Relevant sampling in finitely generated shift-invariant spaces. J. Approx. Theory. 240 (2019), 1-15. 
			
			
			\bibitem{gori} F. Gori, Sampling in optics, In: R.J. Marks II(ed.), Advanced topics in Shannon sampling and interpolation theory. Springer Texts Electrical Engrg. Springer New York (1993), 37-83.
			
			
			\bibitem{sun2021} Y. Li, Q. Sun, J. Xian, Random sampling and reconstruction of concentrated signals in a reproducing kernel space. Appl. Comput. Harmon. Anal. 54 (2021), 273-302.
			
			\bibitem{xian} W. Li, J. Xian, Weighted random sampling and reconstruction in general multivariate trigonometric polynomial spaces. Anal. Appl. (Singap.) 20(5) (2022), 1069-1088. 
			
			
			\bibitem{mamedeo} R. G. Mamedov, The Mellin transform and approximation theory (in Russian). ``Elm". Baku (1991), 273 pp.
			
			\bibitem{sun2010} M.Z. Nashed, Q. Sun, Sampling and reconstruction of signals in a reproducing kernel subspace of $L^p(\RR^d)$. J. Funct. Anal. 258 (7) (2010), 2422-2452.
			
			
			\bibitem{ostrowsky} N. Ostrowsky, D. Sornette, P. Parke, E. R. Pike, Exponential sampling method for light scattering polydispersity analysis. Opt. Acta. 28 (1981), 1059-1070.
			
			
			\bibitem{jma} D. Patel, S. Sivananthan, Random sampling in reproducing kernel subspaces of $L^{p}(\RR^n).$ J. Math. Anal. Appl. 491 (1) (2020), 124270.
			
			
			
			
			
			
			
			
			
			
			
			
			
			
			\bibitem{rks3} D. Patel, S. Sivananthan, Random sampling of signals concentrated on compact set in localized reproducing kernel subspace of $L^{p}(\RR^n)$. arXiv preprint arXiv:2106.13470 (2021).

   \bibitem{pams}  D. Patel, S. Sivananthan,
Spherical random sampling of localized functions on $\mathcal{S}^{n-1}$. Proc. Amer. Math. Soc.
DOI: 10.1090/proc/16393 (2023).

			
			\bibitem{tuan} V.K. Tuan, New type Paley-Wiener theorems for the modified multidimensional Mellin transform. J. Fourier Anal. Appl. 4(3) (1998), 317-328.
			
			
			
			
			
			\bibitem{yang} J. Yang, Random sampling and reconstruction in multiply generated shift-invariant spaces.  Anal. Appl. 17(2) (2019), 323-347.
			
		\end{thebibliography}
	\end{document}